\documentclass{article}
\usepackage{maa-monthly}


\theoremstyle{theorem}
\newtheorem{theorem}{Theorem}

\theoremstyle{definition}

\begin{document}

\title{Squares in arithmetic progressions and  infinitely many primes}
\markright{Primes and Progressions}
\author{Andrew Granville}


\maketitle

\begin{abstract}
We give a new proof that there are infinitely many primes, relying on  van der Waerden's theorem for coloring the integers, and Fermat's theorem that there cannot be four squares in an arithmetic progression. We go on to discuss where else these ideas have come together in the past.
\end{abstract}

\section{Infinitely many primes}

Levent Alpoge  recently gave a rather different proof \cite{Alp} that there are infinitely many primes. His starting point was the famous result of van der Waerden (see, e.g., \cite{TV}):
\medskip 

\noindent \textbf{van der Waerden's Theorem.} \emph{Fix integers $m\geq 2$ and $\ell\geq 3$. If  every positive integer is assigned one of $m$ colors, in any way at all, then there is an $\ell$-term arithmetic progression  of integers which each have the same color.}
\medskip

Using a clever coloring in van der Waerden's theorem, and some elementary number theory, Alpoge deduced that there are infinitely many primes. We proceed from van der Waerden's theorem a little differently, employing a famous result of Fermat (see, e.g., \cite{Sil}):

\medskip

\noindent \textbf{Fermat's Theorem.} \emph{There are no four-term arithmetic progressions of distinct integer squares.}
\medskip

From these two results we deduce the following:

\begin{theorem}  There are infinitely many primes.
\end{theorem}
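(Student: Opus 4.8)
The plan is to argue by contradiction, so I would begin by assuming there are only finitely many primes, say $p_1,\dots,p_k$. The key preliminary observation is that every positive integer $n$ can be written uniquely as $n = a m^2$ with $a$ squarefree, and that a squarefree number is exactly a product of a subset of $\{p_1,\dots,p_k\}$; hence there are only $2^k$ possible values for the ``squarefree part'' $a$. This lets me color each positive integer by its squarefree part, obtaining a coloring of the positive integers with $2^k$ colors. (Since $2$ is prime we have $k\ge 1$, so $2^k \geq 2$ and this is a legitimate input to van der Waerden's theorem.)

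Next I would invoke van der Waerden's theorem with $m = 2^k$ colors and $\ell = 4$: there must be a four-term arithmetic progression $n,\ n+d,\ n+2d,\ n+3d$ all of whose terms share the same squarefree part $a$. Here $d \neq 0$ --- this is built into what ``arithmetic progression'' means --- and by reversing the progression if necessary I may assume $d > 0$, so the four terms are distinct. Writing $n = a x_1^2$, $n+d = a x_2^2$, $n+2d = a x_3^2$, $n+3d = a x_4^2$ and dividing each equation through by $a$, I obtain four integer squares $x_1^2, x_2^2, x_3^2, x_4^2$ in arithmetic progression, with common difference $x_2^2 - x_1^2 = d/a$; since the original four terms were distinct, these four squares are distinct as well. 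This directly contradicts Fermat's theorem, and the contradiction proves that there are infinitely many primes.

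I expect the only subtle step to be the very first one: recognizing that finiteness of the set of primes forces finiteness of the set of squarefree numbers, which is what supplies the bounded number of colors. After that, the role of van der Waerden's theorem is merely to locate a long enough monochromatic progression, and the role of Fermat's theorem is to forbid it; neither requires further work. It is also worth being explicit that ``arithmetic progression'' carries a nonzero common difference, since that is precisely what guarantees the four squares produced are distinct and hence that Fermat's theorem applies.
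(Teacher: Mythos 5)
Your proof is correct and follows essentially the same approach as the paper: color integers by their squarefree part (equivalently, by the parity vector of prime exponents), apply van der Waerden with $\ell = 4$ to find a monochromatic four-term progression, and divide out the common squarefree factor to get four distinct squares in arithmetic progression, contradicting Fermat's theorem. The only cosmetic difference is that the paper labels colors by the vector $(r_1,\ldots,r_k)$ of exponent parities rather than by the squarefree part itself, which amounts to the same $2^k$-coloring.
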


\begin{proof}
If there are  only finitely many primes $p_1,\ldots, p_k$, then every integer $n$ can be written as $p_1^{e_1}\cdots p_k^{e_k}$ for some integers $e_1,e_2,\ldots,e_k\geq 0$. We can write each of these exponents $e_j$ as
\[
e_j=2q_j+r_j, \ \text{where} \ r_j  \ \text{  is the ``remainder''  when dividing} \ e_j \ \text{by 2,}
\]
and equals 0 or 1. Therefore if we let
\[
R=p_1^{r_1}\cdots p_k^{r_k}   
\]
then $R$ is a squarefree integer that divides $n$, and 
\[
n/R \  \text{is the square of an integer}.
\]
(In fact, $n/R=Q^2$ where $Q=p_1^{q_1}\cdots p_k^{q_k}$.)

We will use $2^k$ colors to color the integers: Integer $n$ is colored by the vector $(r_1,\ldots,r_k)$. 
By van der Waerden's theorem there are four integers in arithmetic progression
\[ A, A+D, A+2D, A+3D, \ \text{with} \ D\geq 1,\] 
which all have the same color $(r_1,\ldots,r_k)$. Now $R=p_1^{r_1}\cdots p_k^{r_k}$ divides each of these numbers, so also divides $D=(A+D)-A$. Letting
$a=A/R$ and $d=D/R$, we see that 
\[ a,a+d,a+2d,a+3d \ \text{are four squares in arithmetic progression,}  \] 
contradicting Fermat's theorem.
\end{proof}

These ideas have come together before to make a rather different, not-too-obvious deduction:

\section{The number of squares in a long arithmetic progression}

Let $Q(N)$ denote the maximum number of squares that there can be in an arithmetic progression of length $N$.
A slight refinement of the Erd\H os--Rudin conjecture states that the maximum number is attained by the arithmetic progression
\[
\{ 24n+1:\ 0\leq n\leq N-1 \}
\]
which contains $\sqrt{ \frac 83 N}$ squares, plus or minus one. From Fermat's theorem one easily sees that  
$$
Q(N) \leq \frac{3N+3} 4,
$$
but it is difficult to see how to improve the bound to, say, $Q(N)\leq \delta N+b$ for some constant $\delta <\frac 34$.

 It was this problem that inspired one of the most influential results \cite{Sz2}  in combinatorics and analysis (see, e.g., \cite{Gow}):
 \medskip

\noindent \textbf{Szemer\' edi's Theorem.} \emph{Fix $\delta>0$ and integer $\ell\geq 3$. If $N$ is sufficiently large (depending on $\delta$ and $\ell$) then any subset $A$ of $\{ 1,2,\ldots,N\}$ with $\geq \delta N$ elements, must contain an $\ell$-term arithmetic progression.}
\medskip

 van der Waerden's theorem is a consequence of Szemer\' edi's theorem, because if we let $\delta=1/m$ and we color the integers in 
$\{ 1,2,\ldots,N\}$ with $m$ colors, then at least one of the colors is used for at least $N/m$ integers. We apply 
 Szemer\' edi's theorem to this subset $A$ of $\{ 1,2,\ldots,N\}$, to obtain an $\ell$-term arithmetic progression  of integers which each have the same color.
 
 In \cite{Sz1}, Szemer\' edi applied his result to the question of squares in arithmetic progressions:
 
 \begin{theorem} [Szemer\' edi]   For any constant $\delta>0$, if $N$ is sufficiently large, then $Q(N)<\delta N$.
\end{theorem}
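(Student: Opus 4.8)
The plan is to argue by contradiction, running the proof of Theorem 1 almost verbatim but with Szemer\'edi's theorem playing the role of van der Waerden's. Fix $\delta>0$ and let $N_0$ be the threshold that Szemer\'edi's theorem attaches to this $\delta$ and to $\ell=4$. Suppose, for some $N\ge N_0$, that $Q(N)\ge \delta N$; then there is a nonconstant arithmetic progression $a_1,a_2,\ldots,a_N$, say $a_j=A+jD$ with $D\ge 1$, at least $\delta N$ of whose terms are perfect squares.

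Now I would simply record which terms are squares. Let $S=\{\, j : 1\le j\le N,\ a_j \text{ is a perfect square}\,\}$, a subset of $\{1,2,\ldots,N\}$ with $|S|\ge \delta N$. Since $N\ge N_0$, Szemer\'edi's theorem hands us a genuine four-term arithmetic progression $j,j+e,j+2e,j+3e$ inside $S$, with $e\ge 1$. Pulling this back to the original progression, the four integers $a_j,a_{j+e},a_{j+2e},a_{j+3e}$ are perfect squares in arithmetic progression, with common difference $eD\ge 1$; in particular they are distinct. This contradicts Fermat's theorem, so in fact $Q(N)<\delta N$ whenever $N\ge N_0$, which is the claim.

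The only points requiring a word of care are cosmetic. One should restrict attention to nonconstant progressions (or, equivalently, count distinct square values), since a constant progression contributes at most one square and is harmless for large $N$; and if $D$ happens to be negative one just reverses the progression to arrange $D\ge 1$. The four pulled-back squares are genuinely distinct precisely because their common difference $eD$ is a positive integer. There is no real obstacle in the deduction itself --- the entire weight of the argument rests on Szemer\'edi's theorem, whose proof is the famously hard ingredient --- and it is no accident that the relevant length is $\ell=4$: that is exactly what is needed to collide with Fermat's ``four squares'' obstruction, just as in Theorem 1.
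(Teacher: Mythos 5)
Your proof is correct and follows essentially the same route as the paper: record the index set of squares, apply Szemer\'edi's theorem with $\ell=4$ to extract a four-term progression of indices, pull back to obtain four distinct squares in arithmetic progression, and contradict Fermat's theorem. The remarks on nonconstant progressions and the sign of $D$ are sensible housekeeping that the paper handles by fixing $s\geq 1$ at the outset.
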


\begin{proof} Suppose that there are at least $\delta N$ squares in the arithmetic progression 
$\{ r+ns: \ n=1,2,\ldots ,N\}$ with $s\geq 1$; that is, there exists a subset $A$ of $\{ 1,2,\ldots,N\}$ with at least $\delta N$ elements for which 
\[
r+ns \ \text{is a square, whenever} \ a\in A.
\]
Szemer\' edi's theorem with $\ell=4$ then implies that $A$ contains a four-term arithmetic progression, say
$u+jv$ for $j=0,1,2,3$. For these values of $n$, we have $r+ns=a+jd$, where $a=r+us$ and $d=vs>1$. That is, we have shown that 
\[ a,a+d,a+2d,a+3d \ \text{are four squares in arithmetic progression,}  \] 
contradicting Fermat's theorem.
\end{proof}

\section{More heavy machinery}

One day over lunch, in late 1989, Bombieri showed me 
a completely different proof of Theorem 2, this time relying on  one of the most influential results   in algebraic and arithmetic geometry, Faltings'  theorem \cite{BGU}. Faltings'  theorem is not easy to state, requiring a general understanding of an algebraic curve and its genus. The basic idea is that an equation in two variables with rational coefficients has only finitely many rational solutions (that is, solutions in which the two variables are rational numbers), unless the equation ``boils down to'' an equation of degree 1, 2 or 3.  To be precise about ``boiling down'' involves the concept of {\sl genus}, which is too complicated to explain here (see, e.g., \cite{BGU}). Here we only need a simple consequence of Faltings' theorem.\medskip

\noindent \textbf{Corollary to Faltings' Theorem.} \emph{Let $b_1,b_2,\ldots,b_k$ be distinct integers with $k\geq 5$. Then there are only finitely many rational numbers $x$ for which 
\[
(x+b_1)(x+b_2)\cdots (x+b_k) \ \text{is the square of a rational number}.
\]
}
\medskip

\begin{proof} [Another proof of Theorem 2] Fix an integer $M>6/\delta$.  Let $B(M)$ be the total number of rational numbers $x$
and integer 6-tuples $ b_1=0<  b_2<\ldots < b_6\leq M-1$ for which 
$(x+b_1)(x+b_2)\cdots (x+b_6)$ is the square of a rational number.  Faltings' theorem implies that 
$B(M)$ is some finite number, as there are 
 only  finitely many choices for the $b_j$. We let $N$ be any integer $\geq M(B(M)+5)$.

The interval $[0,N-1]$ is covered by the sub-intervals $ I_j$ for $j=0,1,2,\ldots,k-1$, where
$I_j$ denotes the interval $[jM, (j+1)M)$, and 
$kM$ is the smallest multiple of $M$ that is greater than $N$.

Let $\mathcal N:=\{ n:\ 0\leq n\leq N-1 \ \text{and} \ a+nd  \ \text{ is a square}\}$, where the arithmetic progression is chosen so that $|\mathcal N| =Q(N)$. Let  $\mathcal N_j=\{ n\in \mathcal N:\ n\in I_j\}$ for each integer $j$. Let $J$ be the set of integers $j$ for which $\mathcal N_j$ has six or more elements.

Now if $n_1<n_2<\ldots<n_6$ all belong to $\mathcal N_j$, write  $x=a/d+n_1$ and $b_i=n_i-n_1$ for $i=1,\ldots ,6$, so that    
\[
b_1=0<  b_2<\ldots < b_6\leq M-1
\]
and   each $x+b_i=a/d+n_i=(a+n_id)/d$, which implies that
\[
(x+b_1)(x+b_2)\cdots (x+b_6)  = \frac{(a+n_1d)(a+n_2d)\cdots (a+n_6d)}{d^6} 
\]
\begin{center}
is the square of a rational number.
\end{center}
This gives rise to one of the $B(M)$ solutions counted above, and all the solutions created in this way are distinct (since given 
$x,d,b_1,\ldots,b_6$ we have each $a+n_jd=d(x+b_j)$). Therefore the set $\mathcal N_j$ gives rise to $\binom{|\mathcal N_j|}6$ such solutions,  and so in total we have
\[
\sum_{j\in J} \binom{|\mathcal N_j|}6 \leq B(M).
\]

It is easy to verify that $r\leq 5+\binom r6$ for all integers $r\geq 1$, and so 
\[
Q(N)=|\mathcal N| = \sum_{j=0}^{k-1}|\mathcal N_j| \leq \sum_{j=0}^{k-1} 5 + \sum_{j\in J} \binom{|\mathcal N_j|}6  \leq 5k+B(M),
\]
as $|\mathcal N_j| \leq 5$ if $j\not\in J$. Finally, as $k\leq N/M+1$ we have
\[
Q(N)  \leq 5k+B(M) \leq \frac{5N}M + (B(M)+5) \leq \frac{6N}M < \delta N,
\]
as desired.
 \end{proof}

Bombieri \cite{BGP} went on, together with Granville and Pintz,  to combine these two proofs (along with much more arithmetic geometry machinery), to prove that 
\[
Q(N)<N^c
\]
for any $c>\frac 23$, for sufficiently large $N$. Bombieri and Zannier \cite{BZ} improved this to $c>\frac 35$ with a rather simpler proof.
The conjecture that $Q(N)$ behaves more like  a constant times $N^{1/2}$ remains open.

\begin{biog}
\item[Andrew Granville] 
\begin{affil}
D\'epartement de math\'ematiques et de statistique, Universit\'e de Montr\'eal, CP 6128 succ. Centre-Ville, Montr\'eal, QC H3C 3J7, Canada\\
   andrew@dms.umontreal.ca\\
\text{\rm and} \\
Department of Mathematics,
University College London, 
Gower Street, 
London, WC1E 6BT, 
United Kingdom \\
   a.granville@ucl.ac.uk
   \end{affil}

\end{biog}
\vfill\eject


\begin{thebibliography}{1}


\bibitem{Alp} L. Alpoge,\,
van der Waerden and the primes,  
\textit{Amer. Math. Monthly}  {\bf 122}  (2015),  784--785.

\bibitem{BGP}   E.  Bombieri, A. Granville, and J. Pintz,\,
Squares in arithmetic progressions,  
\textit{Duke Math. J.}  {\bf 66}  (1992),   369--385.

\bibitem{BGU} E. Bombieri and W. Gubler,\,
\textsl{Heights in Diophantine Geometry},
New Mathematical Monographs, Vol. 4, Cambridge Univ. Press, Cambridge   2006.
 
 
\bibitem{BZ}   E.  Bombieri and U.  Zannier,\,
 A note on squares in arithmetic progressions. II,  
\textit{Atti Accad. Naz. Lincei Cl. Sci. Fis. Mat. Natur. Rend. Lincei} {\bf 13}  (2002),   69--75.

\bibitem{Gow}    W.T.  Gowers,\,
A new proof of Szemer\' edi's theorem,  
\textit{Geom. Funct. Anal.}  {\bf 11}  (2001),   465--588.

\bibitem{Sil}  J.H. Silverman, 
\textsl{The arithmetic of elliptic curves},
Springer Verlag, New York, 1986.
\
\bibitem{Sz1}   E. Szemer\'edi,\,
The number of squares in an arithmetic progression,  
\textit{Studia Sci. Math. Hungar.}  {\bf 9}  (1974), 417.

\bibitem{Sz2}  E. Szemer\'edi,\,
On sets of integers containing no $k$ elements in arithmetic  progression,  
\textit{Acta Arith.}   {\bf 27}  (1975), 199--245.

\bibitem{TV} T. Tao and Van Vu, 
\textsl{Additive Combinatorics},
Cambridge Univ. Press, Cambridge, 2006.


 \end{thebibliography}
\end{document}